\newtheorem{theorem}{Theorem}[section]
\newtheorem{lemma}[theorem]{Lemma} 
\newtheorem{example}[theorem]{Example} 
\newtheorem{definition1}[theorem]{Definition} 
\newtheorem{remark1}[theorem]{Remark} 
\newenvironment{nalign}{
    \begin{equation}
    \begin{aligned}
}{
    \end{aligned}
    \end{equation}
    \ignorespacesafterend
}
\newlength{\wordlength}
\begin{document}
\title{
Neural Control Systems}

\bigskip
\author{\textbf{\textbf{Paolo Colusso$\vphantom{l}^{1}$ and Damir Filipovic$\vphantom{l}^{1}$}} 
\\\\
$\vphantom{l}^{\text{1}}$Ecole Polytechnique F\'ed\'erale de Lausanne\\ and Swiss Finance Institute 
}

\date{16 April 2024}

\maketitle

\begin{abstract}
We propose a function learning method with a control-theoretical foundation. We para\-metrise the approximating function as the solution to a control system on a reproducing-kernel Hilbert space, and propose several algorithms to find the set of controls which bring the initial function as close as possible to the target function. At first, we derive the expression for the gradient of the cost function with respect to the controls that parametrise the control system of difference equations. This allows us to find the optimal controls by means of gradient descent. In addition, we show how to compute derivatives of the approximating functions with respect to the controls and describe two optimisation algorithms relying on linear approximations of the approximating functions. We show how the assumptions we make lead to results which are coherent with Pontryagin's maximum principle. We test the optimisation algorithms on two toy examples and on two higher-dimensional real-world problems, showing that our method succeeds in learning from real data and is versatile enough to tackle learning tasks of different nature.\\
\end{abstract}

\paragraph*{Key words:} machine learning, financial data, neural networks, control theory, optimisation, bilinear control systems. 
\paragraph*{AMS:} 91G20, 68T07, 68T09, 91G60, 93C10.

\section{Introduction}

We contribute to the research in learning algorithms, while aiming at bridging the gap between machine learning and control theory. We work in the realm of function learning: this can be defined as the task of modelling dependence relations between data which are – possibly and likely – multivariate and nonlinear, and it usually occurs by minimising a cost function which measures the distance between a target function and an approximating function. In the method that we propose, the approximating function is parametrised as the solution to a control system on a Hilbert space. We define the control system by means of difference equations in discrete time with a given initial condition. Our goal is to find a set of controls which bring the initial function as close as possible to the target function, and suitable optimisation algorithms to look for such optimal controls are proposed. Our method hence provides contributions in the following ways: it allows to exploit the expertise which has been developed in the last decades in control theory, which serves as its foundation; it extends theoretical results in optimal control to Hilbert spaces; and it channels the acquired knowledge into function-learning tasks, which our data-rich world needs to tackle.

The control system in our model is in discrete time and deterministic. This is one of the many options among those which have been studied in optimal control theory. Dynamic programming and the Bellman optimality principle have been used to determine a set of optimal controls which minimise a total cost along a trajectory in discrete time, where each control determines the next state and entails a cost. 
Deterministic optimal control leads to the maximum principle, which was introduced by Pontryagin (\cite{neustadt1962mathematical}), and which holds both in continuous time and in discrete time. The maximum principle is an ODE (or a difference equation in time in the case of discrete time systems) which characterises optimal trajectories. Interestingly, in our attempt of drawing a link between control theory and machine learning, we show how the assumptions we make lead to results which are coherent with Pontryagin's maximum principle. 

While coherent with classical results in control theory, our approach makes a step forward in integrating a machine-learning approach with a control-theoretical foundation. In the spirit of function learning, we propose a forward propagation defined by a system of difference equations in a reproducing kernel Hilbert space (RKHS). We derive the expression for the gradient of the cost function with respect to the controls that parametrise the difference equations and propose a gradient-descent based approach to find the optimal controls. In addition, we show how to compute derivatives of the approximating functions with respect to said controls and introduce two alternative optimisation algorithms relying on the linearisation of the approximating functions around a fixed control. We test the proposed optimisation algorithms on two toy examples and on two higher-dimensional real-world problems, showing that our method succeeds in learning from real data and is versatile enough to tackle learning tasks of different nature, both a regression and a classification problem.

The study of control systems has its foundations in optimisation theory and dynamical systems (\cite{bot22} and references therein), with applications ranging from cellular biology and epidemic management to the development of stable financial systems and efficient power grids (\cite{batti13}, \cite{Schneider_2022}, \cite{GOTTGENS20152614}, \cite{schaf2018}). Controllability, the property of driving an initial state to a desired target via a control system, however, suffers from several drawbacks, and practical constraints, energy and resource constraints to mention two, hinder its applications (\cite{PhysRevLett108}). Optimal control theory (see \cite{todorov06optimal}) focuses on the minimisation of a cost function rather than on controllability and relies on the pioneering works of Pontryagin (\cite{neustadt1962mathematical}) and Bellman (\cite{bellman1966dynamic}). In our approach, the control system mainly serves the purpose of parametrising the approximating function, instead of that of describing a physical system.

While (stochastic) optimal control problems are theoretically well known (\cite{Fleming75}, \cite{Bertsekas2017}), exact numerical methods for their solutions are yet to be extensively studied (\cite{bensoussan23}) and the literature mainly focuses on approximate solutions (\cite{oster2021approximating}, \cite{debrabant2014semilagrangian}, \cite{Kalise_2018}, \cite{horowitz2014linear}). Numerical methods can be shown to converge only under the discretisation of the state space, which is problematic in high-dimensional state spaces due to the curse of dimensionality. One approach which seems to be prevailing at the moment is the combination of control theory with machine learning, which leads to what is referred to as \emph{reinforcement learning} (see as a reference \cite{mitReinforcementLearning}, \cite{bertsekas2019reinforcement}, \cite{recht2018tour}). \cite{kim20} establish the link between Q-learning -- a field of reinforcement learning based on stochastic approximation -- and HJB equations. Our approach also bridges the gap between machine learning and control theory, as well as being robust when it comes to scaling to higher dimensions.

Historically, the study of control systems has mainly been focusing on linear control systems, although advancements in nonlinear analysis paved the way to nonlinear control system theory (\cite{sontag98}). Within the latter, the class of bilinear control systems has allowed for a rich yet simple theory, with several applications (\cite{elliott09} and the literature within). It has to be stressed, however, that the focus of the literature has mostly been on state spaces in $\mathds{R}^n$. In addition, the link between machine learning theory and control theory has yet to be fully explored, as it mostly relies on numerical approximations to date. There are exceptions. \cite{NEURIPS2020} propose to integrate optimal control theory into an end-to-end learning process, leading to an optimal-control-informed learning framework. Their work is however limited to $\mathds{R}^n$ and to linear systems. Furthermore, theory on control systems on Hilbert spaces exists, but it is largely related to the stability properties of the system (see for example \cite{feintuch2012robust}) and to its controllability (\cite{slemrod1974note}), and, again, mostly in the linear case. Not only is the focus of our approach on Hilbert spaces and on function learning, but the system that we consider is also a nonlinear one, making our approach a considerable extension of the existing literature in the field.

Our paper contributes to the literature by linking machine learning and control theory. On the one hand, the control system is used to parametrise the approximating function as part of the learning task; on the other hand, we show that our approach accommodates classical results in control theory, extending them to Hilbert spaces. Unlike reinforcement learning, which shares the goal of bridging the gap between machine learning and control theory, the control system with which we work is deterministic. In addition, it is in discrete time, and is shown to successfully scale to higher dimensions. Lastly, the algorithms that we propose are versatile enough to be adapted to a wide range of learning tasks.

\subsection{Motivation}\label{moti}

We consider an artificial neural network. This corresponds to the mapping of an input state $x \in \mathds{R}^d$ to an output state $y \in \mathds{R}$ defined by:
\[
x \mapsto y = \phi_0 \, \circ \, \phi_1 \, \circ \dots \, \circ \phi_T(x),
\]
where $T$ is the number of hidden layers $\phi_t(x) = \sigma(A_tx + b_t):  \mathds{R}^d \to \mathds{R}^d$, for an activation function $\sigma: \mathds{R}^d \to \mathds{R}^d$, weight matrices $A_t$ and bias terms $b_t$, $t = 1, \dots, T$. Lastly, $\phi_0: \mathds{R}^d \to \mathds{R}$ is a readout map.

Having set $z_0(x) = x$, the forward propagation through this network corresponds to the sequence of mappings $z_t:\mathds{R}^d \to \mathds{R}^d$:
\begin{align*} 
z_{t+1} &=  \phi_{T-t} \, \circ \dots \, \circ \phi_T \circ z_0,\\
t &= 0, \dots, T-1.
\end{align*}
In contrast, the backward propagation $h_t:\mathds{R}^d \to \mathds{R}^d$ is given by
\[
h_{t+1} =  \phi_{0} \, \circ \dots \, \circ \phi_{t+1},
\]
which leads to the linear dynamic system defined by
\begin{nalign}\label{map}
    h_{t+1} &= h_t \, \circ \phi_{t+1},\\
    h_0 &= \phi_0.
\end{nalign}

Notice that, although both propagations lead to the same terminal function, only the latter is linear in the functions $h_t$. However, research has mainly been focusing on the forward propagation $z_t$ and its continuous-time limit, see for instance \cite{Chen18}. In their work, as $T$ becomes larger, the incremental contribution introduced by each layer then results in a neural differential equation (ODE) in $\mathds{R}^d$. This has the form
\begin{align*} 
\frac{d}{dt}z^{}_t &= f_t(z_t;\theta) = \phi_{T-t+1}(z)\\
 z_0 &= x,
\end{align*}
for $f_t(,\theta): \mathds{R}^d \to \mathds{R}^d$ non-linear mappings which depend on a parameter $\theta$. Gradient descent is then used to train the neural ODE. In the same framework of forward propagations, both \cite{E17} and \cite{Cucchiero20} propose an approach which relies on controls. In their work, the right-hand side of the ODE depends on controls $u_t$. Under the assumptions of the Chow-Rashevskii theorem (see \cite{Montgomery02}), \cite{Cucchiero20} show the universal interpolation property, namely that for any finite set of input-output pairs $(x_i, y_i)$ in $\mathds{R}^d \times \mathds{R}^d$ there exist controls $(u_t)$ such that $z_T (x_i) = y_i$ for all $i$. However, no indication is provided as to how the optimal controls are to be found. In addition, in all cases, generalisations properties cannot be inferred on the mapping $x \mapsto z_T(x)$.

\section{Framework}\label{meth}
In the following, unless explicitly stated, we consider $t = 0, \dots, T-1$ and $i = 1, \dots, q$.

We formalise the linear dynamic system \eqref{map} as the forward propagation
\begin{nalign}\label{fwdp}
h_{t+1} & = P_t h_t,\\
h_0 & = \phi_0.
\end{nalign}
Let $\mathcal{H}$ be a reproducing kernel Hilbert space (RKHS) with kernel $k$ of functions defined on a state space $\mathcal{X}$.
We write the forward propagation \eqref{fwdp} as a system of difference equations in $\mathcal{H}$ and in discrete time. We index time by $t \in \{0,1, \dots, T\}$, $T \in \mathds{N}$, and obtain
\begin{nalign}\label{diffh}
\Delta h_{t+1} &= A[u_t]h_t,\\
 h_0 &= \phi_0.
\end{nalign}
In the above, we denote as $\Delta h_{t+1} = h_{t+1}-h_t$, and write 
\[
A[u] := u_1 A_1 + \dots + u_q A_q,
\]
where $A_i$ are bounded linear operators on $\mathcal{H}$. The controls are indexed in discrete time, namely for $t \in \{0,1, \dots, T-1\}$, $u = (u_0, \dots, u_{T-1}) \in \mathds{R}^{q \times T}$. Hence, at time $t = T$ the solution of the difference equation is a mapping
\[
h_T(\cdot): \mathds{R}^{q \times T} \to \mathcal{H}.
\]
\begin{lemma}\label{lemmasolh}
The solution $h = h(u)$ for a given control $u = (u_t)$ of the difference equation \eqref{diffh} is 
\begin{equation}\label{solh}
\notag
h_t =  \prod_{s=0}^{t-1} \left(I +  \sum_{i=1}^q A_i u_{i,s}\right)h_0, \quad t = 1, \dots, T.
\end{equation}
\end{lemma}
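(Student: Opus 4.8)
The statement is a straightforward claim that the closed-form product solves the linear first-order recursion, so the natural route is induction on $t$. First I would rewrite the difference equation \eqref{diffh} in one-step "transition" form: since $\Delta h_{t+1} = h_{t+1} - h_t = A[u_t]h_t$, we get
\begin{equation*}
h_{t+1} = (I + A[u_t])h_t = \left(I + \sum_{i=1}^q A_i u_{i,t}\right) h_t,
\end{equation*}
which is well defined because each $A_i$ is a bounded linear operator on $\mathcal{H}$, hence so is $I + \sum_i A_i u_{i,t}$, and it maps $\mathcal{H}$ into itself.

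The base case is $t = 1$: the formula \eqref{solh} gives $h_1 = (I + \sum_i A_i u_{i,0})h_0$, which matches the one-step recursion above at $t = 0$. For the inductive step, assume \eqref{solh} holds for some $t \geq 1$; apply the transition operator at time $t$ to obtain
\begin{equation*}
h_{t+1} = \left(I + \sum_{i=1}^q A_i u_{i,t}\right) h_t = \left(I + \sum_{i=1}^q A_i u_{i,t}\right)\prod_{s=0}^{t-1}\left(I + \sum_{i=1}^q A_i u_{i,s}\right)h_0 = \prod_{s=0}^{t}\left(I + \sum_{i=1}^q A_i u_{i,s}\right)h_0,
\end{equation*}
where the last equality just absorbs the new factor into the ordered product. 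This closes the induction and gives the claim for all $t = 1, \dots, T$.

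The only point needing care is the convention for the ordered product, since the operators $I + \sum_i A_i u_{i,s}$ need not commute for different $s$: I would fix the convention that $\prod_{s=0}^{t-1} M_s := M_{t-1} M_{t-2} \cdots M_0$ (newest factor on the left), so that left-multiplication by the time-$t$ transition operator in the inductive step appends the correct factor on the correct side. There is no analytic obstacle here — no limits, no convergence issues, since $T$ is finite and all operators are bounded — so the "hard part" is purely bookkeeping: making sure the index ranges and the multiplication order in \eqref{solh} are consistent with the recursion \eqref{diffh}. Once the ordering convention is pinned down, the induction is routine.
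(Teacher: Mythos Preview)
Your proposal is correct and follows essentially the same approach as the paper: the paper computes $h_1$ and $h_2$ explicitly and then writes ``Iterating, the claim follows,'' which is exactly your induction spelled out. Your extra care in fixing the left-to-right ordering convention for the noncommuting factors is a welcome clarification that the paper leaves implicit.
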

\begin{proof}
It follows immediately from \eqref{diffh} that function $h_1$ is given by
\[
h_1 = h_0 + A[u_0] h_0 = h_0 + \sum_{i=1}^q A_i u_{i,0}h_{0},
\]
while function $h_2$ by
\begin{align*}
h_2 &= h_1 + \sum_{i=1}^q A_i u_{i,1}h_1 = \left(I +  \sum_{i=1}^q A_i u_{i,1}\right)h_1 =\left(I +  \sum_{i=1}^q A_i u_{i,1}\right) \left(I+ \sum_{i=1}^q A_i u_{i,0}\right)h_{0}.
\end{align*}
Iterating, the claim follows.
\end{proof}

We consider a cost function of the form 
\begin{equation}\label{costf}
\mathcal{E}(u) = F(h_T) + \sum_{t=0}^{T-1} L_{t}(h_{t},u_{t}).
\end{equation}
where $h = h(u)$ denotes the solution of \eqref{diffh} for a given control $u = (u_t)$, for a differentiable terminal cost function $F: \mathcal{H} \to \mathds{R}$ and a differentiable time-dependent cost function $L_t:  \mathcal{H}\times \mathds{R}^{q} \to \mathds{R}$. 


The goal is to learn controls $u$ by solving
\begin{equation}\label{minprob}
\min_{u \in \mathds{R}^{q \times T}} \mathcal{E}(u).
\end{equation} 
We notice that \eqref{costf} is not convex in $u$ in general, even if $F$ and $L_t$ are, due to the fact that $h$ is not. The following counter-example illustrates that, even in a simple one-dimensional case under a convex terminal cost, convexity of the mapping $u \mapsto \mathcal{E}(u)$ is unwarranted.
\begin{example}\label{exconv} Consider $\mathcal{H} = \mathds{R}$, $q=1$, $T=2$, and $Ah = ah$, for some $a \in \mathds{R}$. Take a quadratic terminal cost, $F(h) = h^2$.

It follows from Lemma \ref{lemmasolh} that, at terminal time $T= 2$, $h$ is given by $h_2 = (1+au_1)(1+au_2)h_0$, so that $\mathcal{E}(u) = F(h_2) := h_2^2 = (1+au_1)^2(1+au_2)^2h^2_0$, whose Hessian matrix is
\[
D^2 \mathcal{E}(u) = 
\begin{pmatrix}
2a^2(1+au_2)^2h^2_0 & 4a^2(1+au_1)(1+au_2)h^2_0 \\
4a^2(1+au_1)(1+au_2)h^2_0 & 2a^2(1+au_1)^2h^2_0
\end{pmatrix}.
\]
When evaluated at $u_1 = u_2 = 0$ the Hessian is 
\[
D^2 \mathcal{E}(0) = 2a^2 h^2_0
\begin{pmatrix}
1 & 2 \\
2 & 1
\end{pmatrix},
\]
whose determinant is negative. This shows that the cost function is in general not convex in $u$.
\end{example}

Given the non convexity of $\mathcal{E}(u)$, one can hope at best for local minimisers, which we will search with gradient-based methods. Thereto, we now derive a general expression for the gradient.

We define the adjoint response $h^*_t \in \mathcal{H}$ along the solution $h_t$ for the control system \eqref{diffh} with control $(u_t)$ and cost function $\mathcal{E}$ as the solution to the linear terminal value problem
\begin{nalign} \label{der1}
\Delta h_{t+1}^* &= -A^*[u_t]h_{t+1}^* - D_1L_t(h_t, u_t),\\
h^*_T &= DF (h_T).
\end{nalign}
where $A^*$ is the adjoint operator of $A$.


\begin{theorem}\label{lemmagrad} The gradient of the cost function with respect to the controls $(u_t)$ is given by
\begin{equation}\label{der2}
D\mathcal{E}(u)_{i,t} = \langle h_{t+1}^*,A_ih_{t} \rangle_{\mathcal{H}} +  D_{2,i}L_t(h_t, u_{t}),
\end{equation}
where $D\mathcal{E}(u) \in \mathds{R}^{q\times T}$ $\forall u \in \mathds{R}^{q\times T}$.
\end{theorem}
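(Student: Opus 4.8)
The plan is to use the standard adjoint (costate) method for discrete-time control systems: differentiate $\mathcal{E}$ along the variational equation for $u\mapsto h(u)$, and then collapse the resulting finite sum by a discrete summation-by-parts identity driven by the adjoint equation \eqref{der1}. Everything in sight is smooth (the $A_i$ are bounded linear operators, $F$ and the $L_t$ are differentiable, and $T<\infty$), so all differentiations and finite sums may be interchanged freely; the only genuinely analytic point to record is that $u\mapsto h_s(u)$ is a smooth $\mathcal{H}$-valued map, which is immediate from Lemma \ref{lemmasolh} since each $h_s$ is a finite product/sum in the entries of $u$ with coefficients obtained by applying bounded operators to the fixed vector $h_0=\phi_0$.

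First I would fix a pair $(i,t)$ and introduce the sensitivity $\delta h_s := \partial h_s/\partial u_{i,t}\in\mathcal{H}$. By Lemma \ref{lemmasolh}, $h_s$ depends only on $u_0,\dots,u_{s-1}$, so $\delta h_s=0$ for $s\le t$. Differentiating the recursion $h_{s+1}=(I+A[u_s])h_s$ in $u_{i,t}$ gives the variational equation $\delta h_{t+1}=A_i h_t$ (there is no $(I+A[u_t])\delta h_t$ term because $\delta h_t=0$) and $\delta h_{s+1}=(I+A[u_s])\delta h_s$ for $s\ge t+1$. Applying the chain rule to \eqref{costf} and using that $\delta h_s$ vanishes for $s\le t$ while $L_t$ has an explicit dependence on $u_{i,t}$, I obtain
\[
D\mathcal{E}(u)_{i,t}=\langle DF(h_T),\delta h_T\rangle_{\mathcal{H}}+\sum_{s=t+1}^{T-1}\langle D_1L_s(h_s,u_s),\delta h_s\rangle_{\mathcal{H}}+D_{2,i}L_t(h_t,u_t).
\]

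Next I would show the first two terms equal $\langle h_{t+1}^*,\delta h_{t+1}\rangle_{\mathcal{H}}=\langle h_{t+1}^*,A_ih_t\rangle_{\mathcal{H}}$. Rewriting \eqref{der1} as $h_s^*=(I+A[u_s])^*h_{s+1}^*+D_1L_s(h_s,u_s)$, and pairing with $\delta h_{s+1}=(I+A[u_s])\delta h_s$ for $s\ge t+1$, I get $\langle h_{s+1}^*,\delta h_{s+1}\rangle=\langle (I+A[u_s])^*h_{s+1}^*,\delta h_s\rangle=\langle h_s^*,\delta h_s\rangle-\langle D_1L_s(h_s,u_s),\delta h_s\rangle$, i.e. the telescoping identity $\langle h_s^*,\delta h_s\rangle-\langle h_{s+1}^*,\delta h_{s+1}\rangle=\langle D_1L_s(h_s,u_s),\delta h_s\rangle$. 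Summing over $s=t+1,\dots,T-1$, using the terminal condition $h_T^*=DF(h_T)$, yields $\langle h_{t+1}^*,\delta h_{t+1}\rangle=\langle DF(h_T),\delta h_T\rangle+\sum_{s=t+1}^{T-1}\langle D_1L_s(h_s,u_s),\delta h_s\rangle$. Substituting this back and using $\delta h_{t+1}=A_ih_t$ gives \eqref{der2}.

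The main obstacle is not conceptual but bookkeeping: getting the summation endpoints exactly right (that $\delta h_{t+1}$ is precisely $A_ih_t$ with no surviving $\delta h_t$ term, and that the telescoping sum starts at $s=t+1$ because $\delta h_s=0$ for $s\le t$), together with the careful handling of the two sources of dependence of $L_t$ on $u_{i,t}$ (through the state $h_t$, which is zero here, and explicitly through the second argument, which produces $D_{2,i}L_t$). Everything else — differentiability of $u\mapsto h_s(u)$, boundedness of the $A_i$ ensuring the inner products and adjoints are well defined, and the legitimacy of the Fréchet chain rule in $\mathcal{H}$ — is routine given the polynomial structure from Lemma \ref{lemmasolh}.
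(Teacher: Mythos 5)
Your proof is correct: the sensitivity computation ($\delta h_s=0$ for $s\le t$, $\delta h_{t+1}=A_ih_t$, $\delta h_{s+1}=(I+A[u_s])\delta h_s$ for $s\ge t+1$), the chain-rule expansion of $D\mathcal{E}(u)_{i,t}$, and the telescoping identity $\langle h_s^*,\delta h_s\rangle-\langle h_{s+1}^*,\delta h_{s+1}\rangle=\langle D_1L_s(h_s,u_s),\delta h_s\rangle$ obtained by pairing the variational equation with the adjoint recursion $h_s^*=(I+A[u_s])^*h_{s+1}^*+D_1L_s(h_s,u_s)$ all check out, including the empty-sum case $t=T-1$. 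Your route is, however, genuinely different from the paper's. The paper organises the argument around the residual (cost-to-go) functions $J_t(h_t,u_t,\dots,u_{T-1})$, proves by backward induction that $D_{h_t}J_t=h_t^*$, and then reads off $D_{u_t}J_t$ from a single application of the chain rule at step $t$; this is a dynamic-programming organisation that gives the adjoint state the interpretation of the gradient of the value-to-go, and it dovetails with the paper's subsequent discussion of Pontryagin's principle. You instead propagate the forward sensitivity $\delta h_s$ and collapse the resulting sum over future times by discrete summation by parts against the adjoint; this is the classical variational/duality formulation of the adjoint method. The two are dual bookkeepings of the same cancellation: your version avoids introducing the auxiliary functions $J_t$ and makes the role of the adjoint terminal condition and of the telescoping explicit, while the paper's version packages the entire tail of the sum into the induction hypothesis $D_{h_{t+1}}J_{t+1}=h_{t+1}^*$. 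Either is a complete and rigorous proof of \eqref{der2}.
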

\begin{proof}
Define the residual cost incurred from time $t$ to the last period $T$ as the function 
\begin{align*}
J_t &: \mathcal{H} \times \mathds{R}^{q\times (T-t)} \to \mathds{R},\\
J_T(h_T) &: = F(h_T),\\
J_t(h_t, u_t, \dots, u_{T-1}) &:=  L_t(h_t,u_t) + J_{t+1}\left(h_{t+1}, u_{t+1}, \dots, u_{T-1}\right), 
\end{align*}
recalling that $h_{t+1}$ can be expressed as a function of $h_t$ by means of the relation $h_{t+1} = (I+A[u_t])h_t$.
Notice that 
\begin{equation}\label{j0}
\mathcal{E}(u) = J_0(h_0,u).
\end{equation}
We can obtain a recursive relation for the gradient of the residual cost at time $t$ with respect to the control $u_t$. By the chain rule, this is given by
\begin{align*}
D_{u_{t}}J_t(h_t, u_t, \dots, u_{T-1}) & = D_{2} L_t(h_t,u_t) +  \langle D_{h_{t+1}} J_{t+1}((I+A[u_t])h_t , u_{t+1}, \dots, u_{T-1}), \,D_{u_{t}}h_{t+1} \rangle_{\mathcal{H}}\\
& =  D_{2} L_t(h_t,u_t) +  \langle D_{h_{t+1}} J_{t+1}\left(h_{t+1}, u_{t+1}, \dots, u_{T-1}\right), \,Ah_{t} \rangle_{\mathcal{H}}.
\end{align*}
For a generic time $s= t, \dots, T-1$, the gradient can again be computed recursively, as follows:
\[
D_{u_{s}}J_t(h_t, u_t, \dots, u_{T-1}) =  
\begin{cases}
D_{2} L_t(h_t,u_t) +  \langle D_{h_{t+1}} J_{t+1}, \,Ah_{t} \rangle_{\mathcal{H}},\quad & s = t \\
D_{u_s} J_{t+1}((I+A[u_t])h_t , u_{t+1}, \dots, u_{T-1}), \quad & s = t+1, \dots, T-1.\\
\end{cases}
\]
This implies that, for any $t=0, \dots, T-1$:
\[
D_{u_{s}}J_t(h_t, u_t, \dots, u_{T-1}) =  
\begin{cases}
D_{2} L_t(h_t,u_t) +  \langle D_{h_{t+1}} J_{t+1}\left(h_{t+1}, u_{t+1}, \dots, u_{T-1}\right), \,Ah_{t} \rangle_{\mathcal{H}},\quad & s = t \\
D_{2} L_{t+1}(h_{t+1},u_{t+1}) +  \langle D_{h_{t+2}} J_{t+2}, \,Ah_{t+1} \rangle_{\mathcal{H}},\quad & s = t+1 \\
\quad \vdots & \, \vdots \\
D_{2} L_{T-1}(h_{T-1},u_{T-1}) +  \langle D_{h_{T}} J_{T}, \,Ah_{T-1} \rangle_{\mathcal{H}},\quad & s = T-1.\\
\end{cases}
\]
We now consider the derivative of the residual cost with respect to $h$, namely the linear operator $D_{h_{t}} J_{t}(h, u): \mathcal{H} \to \mathds{R}$. In order to show that $D_{h_{t}} J_{t}(h_t, u_t, \dots, u_{T-1}) = h^*_t$, we proceed by induction.

At time $T$ the derivative of the residual cost is equal to the derivative of the terminal cost:
\[
D_{h_T}J_T(h_T)  = DF(h_T) = h^*_T,\\
\]
with the second equality following from the definition of the adjoint equation.
As for $t<T$, one has:
\begin{align*}
D_{h_t}J_t(h_t, u_t, \dots, u_{T-1}) &= D_{1} L_{t}(h_t,u_t) + D_{h_t}J_{t+1}\left((I+A[u_t])h_t, u_{t+1}, \dots, u_{T-1}\right) \\
 & = D_{1} L_{t}(h_t,u_t) \\
 & \quad + (I+A^*[u_t]) D_{h_{t+1}}J_{t+1}\left(h_{t+1}, u_{t+1}, \dots, u_{T-1}\right) |_{h_{t+1} = (I + A[u_t])h_t} \\
& = D_{1} L_{t}(h_t,u_t)  +  (I+A^*[u_t])h^*_{t+1} \\
& =  h^*_{t},
\end{align*}
where the third equality follows from the induction step and the last one from the definition of the adjoint equation.

The statement of the theorem follows because of \eqref{j0}.
\end{proof}


The following example shows an alternative choice for the operators $A$ with respect to that implemented in this work. In its generality, our method can encompass operator choices which mirror the common neural network architectures, and can hence consist of the composition of linear and non-linear functions, as explained below.

\begin{example}
\subsubsection*{Neural-network-like Operator.}
Fix functions $\varphi_1, \dots, \varphi_q: \mathcal{X} \to \mathcal{X}$ such that $h \circ \varphi \in \mathcal{H}$ $\forall h \in \mathcal{H}$.

As an example, and inspired by the architecture of a neural network, one can take the composition of a linear and a non-linear operator, so $\varphi_i(\xi) = \sigma_i(W_i \xi + b_i)$. Here, $W_i: \mathcal{X} \to \mathcal{X}$ are linear functions and $\sigma_i: \mathcal{X} \to \mathcal{X}$ are non-linear functions. 

Assume that there exists a subset $\mathcal{X}_0 \subseteq \mathcal{X}$, dense in $\mathcal{X}$ in the sense that $h(\xi) = 0$ implies that $h=0$ $\forall \xi \in \mathcal{X}$, and such that the set $\mathcal{S} = \{k_{\xi} \, \lvert \, \xi \in \mathcal{X}_0\}$ is linearly independent in $\mathcal{H}$. Define the operator $A_i$ on $\mathcal{S}$
\[
A_i k_{\xi}(\cdot):= k_{\varphi_i(\xi)}(\cdot).
\]
The adjoint operator $A_i^*$ of $A_i$ is defined for any $h \in \mathcal{H}$ and we have
\[
\langle A_i^*h, k_{\xi} \rangle_{\mathcal{H}} = \langle h, A_i k_{\xi} \rangle_{\mathcal{H}} = \langle h, k_{\varphi_i(\xi)} \rangle_{\mathcal{H}} = h(\varphi_i(\xi)) =  \langle h \circ \varphi_i, k_{\xi} \rangle_{\mathcal{H}}.
\]
In addition, $\mathcal{X}_0$ is dense in the sense that $h(x) = 0$ implies that $h=0$ $\forall x \in E$.
%
%
We consider functions $h_t$ solving the difference equation \eqref{diffh} under the choice of operators $A_1, \dots A_q$, with the additional constraint that $h_0 \in \mathcal{D} = \emph{span} \, \mathcal{S}$. 

As an example, when the initial condition is given by $k_{\xi_0}$, Lemma \ref{lemmasolh} shows that function $h_t$ will be given by
a linear combination of compositions of the operators $(A_i)_i$ applied to $k_{\xi_0}$. 

Notice that
\[
A_iA_jk_{\xi_0} = k_{\varphi_i (\varphi_j (\xi_0))},
\]
and
\[
(A_i)^p k_{\xi_0} = k_{\underbrace{\varphi_i \circ  \dots  \circ \varphi_i}_{\text{p times}}(\xi_0)},
\]
so that, more specifically, $h_t$ will be a linear combination of the elements $k_{\xi_0}$, $k_{\varphi_i(\xi_0)}$, $k_{\varphi_i \circ \varphi_j(\xi_0)}$, ..., $k_{\varphi_{i_1} \circ\varphi_{i_2} \circ \dots \circ \varphi_{i_t}(\xi_0)}$, for $(i_1, i_2 \dots, i_t) \in \{ 1, \dots, q\}^t$. The cardinality of this set is given by $1+ q + \dots + q^t = \frac{q^{t+1}-1}{q-1}$. Consider as an example $q = 10$ operators, the number of basis functions will scale approximately like $10^T$. 
\end{example}

\subsection{Relation to Pontryagin's Maximum Principle}
We show in the following that the expression obtained for the gradient of the total cost is consistent with Pontryagin's maximum principle.

We treat the minimisation problem \eqref{minprob} as a constrained optimisation problem and study its Lagrangian. The constraint is given by the control system \eqref{diffh}. Hence, the Lagrangian is
\begin{equation} \label{lagr}
\mathcal{L}(h,u,\lambda) = F(h_T) +\sum_{t=0}^{T-1} \left( L_t(h_t,u_t) -\langle h_{t+1}-(1+A[u_t])h_t, \lambda_{t+1}\rangle_{\mathcal{H}} \right),
\end{equation}
for lagrangian multipliers $\lambda_t \in \mathcal{H}$. We denote as
\begin{equation} \label{hamil}
\notag
H_t(h,u,\lambda) = \langle A[u]h,\lambda \rangle_{\mathcal{H}} + L_t(h,u)
\end{equation}
the Hamiltonian of the system, which allows us to rewrite the Lagrangian \eqref{lagr} as 
\begin{align*}
\mathcal{L}(h,u,\lambda) & = F(h_T)- \langle h_{T}, \lambda_{T}\rangle_{\mathcal{H}}+ \langle h_{0}, \lambda_{0}\rangle_{\mathcal{H}} \\
& \quad + \sum_{t=0}^{T-1} \left( H_t(h_t,u_t,\lambda_{t+1}) +  \langle h_{t}, \lambda_{t+1}-\lambda_{t}\rangle_{\mathcal{H}}\right).
\end{align*}
The derivatives of the Lagrangian can be computed as:
\begin{align*}
D_{h_t}\mathcal{L}(h,u,\lambda)  & = D_1H_t(h_t,u_t,\lambda_{t+1}) + \lambda_{t+1}-\lambda_t,\\
D_{h_T}\mathcal{L}(h,u,\lambda)  & = DF(h_T) - \lambda_T,\\
D_{u_{i,t}}\mathcal{L}(h,u,\lambda) &= D_{2,i} H_t(h_t,u_t,\lambda_{t+1}),\\
D_{\lambda_{t+1}}\mathcal{L}(h,u,\lambda)  & = D_3H_{t}(h_{t},u_{t},\lambda_{t+1})+ h_{t}-h_{t+1},\\
D_{\lambda_0}\mathcal{L}(h,u,\lambda)  & = 0.
\end{align*}
This leads to the first-order conditions
\begin{align}
\lambda_{t+1}-\lambda_t & = - D_1H_t(h_t,u_t,\lambda_{t+1}),  \label{foclagr1} \\
\lambda_T &= DF(h_T), &  \label{foclagr2}\\
D_{2,i}H_t(h_t,u_t,\lambda_{t+1}) &= 0. \label{fochamilt}
\end{align}

This is remarkable, as it amounts to saying that one could formulate Pontryagin maximum principle as a corollary of Theorem \ref{lemmagrad}. Indeed, the RHS of \eqref{der1} is equal to $-D_1 H(h,u,\lambda)$, so that \eqref{foclagr1} and \eqref{foclagr2} correspond to the system of the adjoint equation \eqref{der1}. In addition, the RHS of \eqref{der2}, the gradient of the cost function, is equal to $D_2 H(h,u,p)$, the LHS of \eqref{fochamilt}.

Stated differently, the conditions of optimality ($i$) retrieve the adjoint difference equation and its terminal condition as Lagrangian multipliers of the constrained optimisation problem, and ($ii$) demand that the gradient of the Hamiltonian with respect to $u$ -- which turns out to be the gradient of the total cost with respect to the controls -- be zero. However, it has to be stressed that, in general, $D\mathcal{E}(u)$ is not equal to $D_2\mathcal{L}(h,u,p)$, even though at the optimum control the two gradients are equal, and equal to zero.

Notice that, in principle, one could handle problem \eqref{minprob} by directly solving the first-order conditions \eqref{foclagr1}-\eqref{fochamilt}. However, this system of equations has no explicit solution and would still entail solving for $h$ and $h^*$ backward and forward. Eventually, one would start by an initial guess for the controls $u$ and perform a gradient search in order to bring the Hamiltonian closer to its minimum. However, this approach would result in the same optimisation method which follows from minimising the total cost by shifting $u$ by its gradient \eqref{der2}.

\section{Workable Specification}\label{ws}
Since the control system \eqref{diffh} is in infinite dimension, in order to make it implementable in practice we now propose a workable specification that renders it amendable for the kernel trick. We let $\mathds{M}$ denote some probability measure on the state space $\mathcal{X}$ and consider the embedding $J_{\mathds{M}}: \mathcal{H} \to L^2_{\mathds{M}}$ mapping a function into its $\mathds{M}$-a.s.-equivalence class. 
The adjoint of $J_{\mathds{M}}$ is $J^*_{\mathds{M}}: L^2_{\mathds{M}} \to \mathcal{H}$ and acts on $g \in L^2_{\mathds{M}}$ as follows:
\begin{equation} \label{adjop}
\notag
J^*_{\mathds{M}}g = \int_{\mathcal{X}} g(\xi) k(\xi,\cdot) \mathds{M}(d \xi).
\end{equation}

The operators $A_i$ are assumed to factorise as follows:
\[
A_i = J^*_{\mathds{M}}B_i J_{\mathds{M}}:  \mathcal{H} \to L^2_{\mathds{M}} \to L^2_{\mathds{M}} \to  \mathcal{H},
\]
\begin{center}
\[
\begin{CD}
A_i: \, &  \mathcal{H} @>>>   \mathcal{H}\\
&@VVJ_{\mathds{M}}V @AAJ_{\mathds{M}}^*A\\
&L^2_{\mathds{M}}  @>B_i>> L^2_{\mathds{M}} .
\end{CD}
\]
\end{center}
for some bounded operators $B_i$ on $L^2_{\mathds{M}}$.

The control system \eqref{diffh} in $\mathcal{H}$ can be pulled down to a control system in $L^2_{\mathds{M}}$ by  observing that $g_t = J_{\mathds{M}}h_t$ solves:
\begin{align} \label{system_g}
\Delta g_{t+1} &= J_{\mathds{M}} J_{\mathds{M}}^{*}B[u_t]g_{t},\\
\notag g_{0} &= J_{\mathds{M}} \phi_0.
\end{align}

The evaluation of the function $h$ follows from \eqref{system_g} and the definition of $g$. Concretely,
we can express $h_t$ in terms of $g_t$ as
\begin{nalign}\label{hsol}
h_t & = \phi_0 + \sum_{s=0}^{t-1} A[u_s]h_s = \phi_0 + \sum_{s=0}^{t-1} J_{\mathds{M}}^{*}B[u_s]g_{s} \\
&=  \phi_0 +  \sum_{s=0}^{t-1} \int_{\chi} k(y,\cdot) B[u_s]g_s(y) \mathds{M}(dy).
\end{nalign}

For the adjoint response we define $g_t^* = J_{\mathds{M}}h^*_t$ and infer from  \eqref{der1} that it solves
\begin{nalign} \label{system_gstar}
\Delta g^{*}_{t+1} &= -J_{\mathds{M}} J_{\mathds{M}}^{*}B^*[u_t]g^*_{t+1} - J_{\mathds{M}} D_1 L_t(h_t,u_t),\\
 g_T^* &= J_{\mathds{M}}DF(h_T).
\end{nalign}
We obtain the following expression for the derivative of the cost function \eqref{der2} in terms of $g_t$ and $g_t^*$:
\begin{equation}\label{grg}
D\mathcal{E}(u)_{i,t} = \langle g_{t+1}^*, B_i g_t \rangle_{L^2_{\mathds{M}}} + D_{2,i}L_t(h_t, u_t).
\end{equation}
Since we need to query the solution of \eqref{system_gstar} for many terminal conditions, it is more efficient to solve such system for a set of basis functions $\Psi$ (operators on $L^2_{\mathds{M}}$) and express $g^*$ in terms of these functions. The following Lemma \ref{lemmagstar} shows how an expression for $g^*$ can be derived.

\begin{lemma}\label{lemmagstar}
Let the linear operator $\Psi_t: L^2_{\mathds{M}} \to L^2_{\mathds{M}}$ satisfy the system of backward difference equations
\begin{nalign}\label{system_bigPsi}
  \Delta \Psi_{t+1} &= b_t \Psi_{t+1}, \\ 
  \Psi_T &= I_{L^2_{\mathds{M}}}.
\end{nalign}
In addition, let $I_{L^2_{\mathds{M}}}$ denote the identity operator on $L^2_{\mathds{M}}$,
\[
b_t = -J_{\mathds{M}} J_{\mathds{M}}^{*}B^*[u_t],
\]
and
\[
l_t = -J_{\mathds{M}} D_1 L_t(h_t,u_t).
\]
Then, the solution $g^*$ of system \eqref{system_gstar} is given by
\begin{equation}\label{gfrompsi}
g^*_t = \Psi_t f_t =  \Psi_t g^*_T -  \Psi_t \sum_{s=t}^{T-1} \Psi_s^{-1} l_s.
\end{equation}
\end{lemma}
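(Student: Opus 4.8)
The plan is to read this as the discrete analogue of the variation-of-constants (integrating-factor) formula, with $\Psi_t$ serving as the backward fundamental solution of the homogeneous part of \eqref{system_gstar}. First I would put the adjoint recursion into explicit one-step form: writing $\Delta g^{*}_{t+1} = g^{*}_{t+1} - g^{*}_t$, equation \eqref{system_gstar} reads $g^{*}_{t+1} - g^{*}_t = b_t g^{*}_{t+1} + l_t$, hence
\begin{equation*}
g^{*}_t = (I_{L^2_{\mathds{M}}} - b_t)\, g^{*}_{t+1} - l_t ,
\end{equation*}
with terminal value $g^{*}_T = J_{\mathds{M}}DF(h_T)$. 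The same rearrangement applied to \eqref{system_bigPsi} gives $\Psi_t = (I_{L^2_{\mathds{M}}} - b_t)\Psi_{t+1}$ with $\Psi_T = I_{L^2_{\mathds{M}}}$; iterating, $\Psi_t = (I_{L^2_{\mathds{M}}} - b_t)(I_{L^2_{\mathds{M}}} - b_{t+1})\cdots(I_{L^2_{\mathds{M}}} - b_{T-1})$, which is invertible as soon as every factor $I_{L^2_{\mathds{M}}} - b_s = I_{L^2_{\mathds{M}}} + J_{\mathds{M}}J_{\mathds{M}}^{*}B^{*}[u_s]$ is invertible.

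Next I would substitute the ansatz $g^{*}_t = \Psi_t f_t$ into the one-step relation. Using $(I_{L^2_{\mathds{M}}} - b_t)\Psi_{t+1} = \Psi_t$, this becomes $\Psi_t f_t = \Psi_t f_{t+1} - l_t$, and applying $\Psi_t^{-1}$ yields the decoupled telescoping recursion
\begin{equation*}
f_t = f_{t+1} - \Psi_t^{-1} l_t , \qquad f_T = \Psi_T^{-1} g^{*}_T = g^{*}_T .
\end{equation*}
Summing from $T$ down to $t$ gives $f_t = g^{*}_T - \sum_{s=t}^{T-1}\Psi_s^{-1} l_s$, and multiplying back by $\Psi_t$ produces exactly \eqref{gfrompsi}. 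To make the argument watertight I would then run it in reverse: \emph{define} $g^{*}_t$ by the right-hand side of \eqref{gfrompsi}, check that (extended to $t=T$) it reduces to $g^{*}_T$ since $\Psi_T = I_{L^2_{\mathds{M}}}$ and the sum is empty, verify $g^{*}_t = (I_{L^2_{\mathds{M}}} - b_t)g^{*}_{t+1} - l_t$ by a one-line computation that uses $\Psi_t = (I_{L^2_{\mathds{M}}} - b_t)\Psi_{t+1}$ and splits off the $s=t$ term of the sum, and invoke uniqueness of solutions of the backward difference equation \eqref{system_gstar}.

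The only step that is not pure bookkeeping is the invertibility of the operators $\Psi_s$, equivalently of $I_{L^2_{\mathds{M}}} + J_{\mathds{M}}J_{\mathds{M}}^{*}B^{*}[u_s]$, which is what makes the symbols $\Psi_s^{-1}$ in \eqref{gfrompsi} meaningful. This holds, for instance, whenever the controls are small enough that $\lVert J_{\mathds{M}}J_{\mathds{M}}^{*}B^{*}[u_s]\rVert < 1$ (Neumann series), and in general I would carry it as a standing assumption under which \eqref{gfrompsi} is well posed, rather than try to establish it unconditionally. A secondary point to keep straight is the non-commutativity of the $\Psi$-products, but since the recursion always multiplies a new factor on the left the ordering is forced and no ambiguity arises.
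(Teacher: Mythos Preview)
Your proof is correct and follows essentially the same route as the paper: both introduce the auxiliary $f_t=\Psi_t^{-1}g^{*}_t$, derive the telescoping recursion $f_t=f_{t+1}-\Psi_t^{-1}l_t$ from the one-step relations $g^{*}_t=(I-b_t)g^{*}_{t+1}-l_t$ and $\Psi_t=(I-b_t)\Psi_{t+1}$, sum to obtain $f_t=g^{*}_T-\sum_{s=t}^{T-1}\Psi_s^{-1}l_s$, and multiply back by $\Psi_t$. Your substitution of the ansatz is a touch more direct than the paper's add-and-subtract manipulation, and your explicit caveat about the invertibility of $I_{L^2_{\mathds{M}}}-b_s$ (which the paper uses silently) is a useful addition.
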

\begin{proof}
Notice to begin with that system \eqref{system_bigPsi} has solution
\[
\Psi_t =(I_{L^2_{\mathds{M}}}-b_t) \Psi_{t+1} =  \prod_{s=t}^{T-1}(I_{L^2_{\mathds{M}}}-b_s).
\]
We define the auxiliary function $ f_t = \Psi_t^{-1} g^*_t$, which satisfies the difference equation
\begin{align}
  \Delta f_{t+1} &= \Psi_{t+1}^{-1} g^*_{t+1} - \Psi_{t}^{-1} g^*_{t} \nonumber\\
  & =  \Psi_{t+1}^{-1} g^*_{t+1} - \Psi_{t}^{-1} g^*_{t} + \Psi_{t}^{-1} g^*_{t+1}- \Psi_{t}^{-1} g^*_{t+1}\nonumber \\
  & = \left( \Psi_{t+1}^{-1}- \Psi_{t}^{-1} \right)g^*_{t+1} + \Psi_{t}^{-1} (g^*_{t+1}-g^*_{t}) \nonumber\\
  & = \left( \Psi_{t+1}^{-1}- \Psi_{t}^{-1} \right)g^*_{t+1} + \Psi_{t}^{-1} (b_t g^*_{t+1}+l_t)\nonumber\\
  & =  \left( \Psi_{t+1}^{-1}- \Psi_{t}^{-1} (I- b_t)  \right)g^*_{t+1} - \Psi_{t}^{-1}l_t\nonumber\\
  & = \Psi_{t}^{-1}l_t \label{lasteq},
\end{align}
along with terminal condition $ f_T = g^*_T$. Notice that the equality \eqref{lasteq} follows from the fact that $ \Psi_{t+1}^{-1} =  \Psi_{t}^{-1} (I- b_t)$.

The solution to the system of backward difference equations for $f$ is given by
\[
f_t = g^*_T -\sum_{s=t}^{T-1} \Psi_s^{-1} l_s,
\]
which implies that $g^*$ can be computed directly from $\Psi$ as in \eqref{gfrompsi}.
\end{proof}
Hence, we can solve the system of difference equations for the basis functions $\Psi$ and query $g^{*}$ for several terminal conditions using such basis functions instead of having to solve the system for $g^*$ repeatedly for each of the terminal conditions.

We consider in the following examples of operators which can be used in applications, and study some of their properties.
\begin{example}\label{exb1}
\subsubsection*{Diagonal Operator.}
We consider $g  \in L_{\mathds{M}}^2$. We define $B$ as a diagonal operator, so that $B g = b g$, for some $b \in L_{\mathds{M}}^{\infty}$. Then, it holds that
\[
\|B g\|_{L_{\mathds{M}}^2}^2 = \int_{\mathds{R}^d} b(\xi)^2 g(\xi)^2 \mathds{M}(d\xi) \leq \|b\|^2_{L_{\mathds{M}}^{\infty}}\|g\|_{L_{\mathds{M}}^2}^2,
\]
with equality for $g=1$. It follows that the operator norm is
\[
\|B\| =  \|b\|_{L_{\mathds{M}}^{\infty}}.
\]
\end{example}
\begin{example}\label{exb2}
\subsubsection*{Rank-one Operator.}
We now consider the operator $B$ to be rank-one, namely such that for some $\beta \in L^2_{\mathds{M}}$
\[
B= \beta \otimes \beta.
\]
The operator $B$ acts on $g \in L^2_{\mathds{M}}$ as follows:
\[
B g = \langle g,\beta \rangle_{L^2_{\mathds{M}}} \beta.
\]
Hence:
\[
\|Bg\|_{L_{\mathds{M}}^2} = \|\beta\|_{L_{\mathds{M}}^2}\lvert\langle \beta,g \rangle_{L_{\mathds{M}}^2}\rvert \leq \|\beta\|_{L_{\mathds{M}}^2}^2\|g\|_{L_{\mathds{M}}^2},
\]
with equality for $g = \beta$. It follows that the operator norm is
\[
\|B\| =  \|\beta\|_{L_{\mathds{M}}^{2}}^2.
\]
\end{example}

\subsection{Empirical Workable Specification} 
We consider a sample $(\xi_1, \dots, \xi_m)$ drawn from $\mathds{M}$, where we assume that $\xi_i \neq \xi_j$ $\forall i \neq j$. Then, when replacing $\mathds{M}$ by its empirical counterpart 
$$\widehat{\mathds{M}} = \frac{1}{m} \sum_{i=1}^m \delta_{\xi_i},$$
the space $L^2_{\widehat{\mathds{M}}}$ becomes $m$-dimensional and we can identify functions $g \in L^2_{\widehat{\mathds{M}}}$ with vectors $g = (g(\xi_1), \dots,  g(\xi_m))^\top$. We represent inner products $\langle f,g\rangle_{L^2_{\widehat{\mathds{M}}}}= \frac{1}{m} f^\top g$ and consider the operators $J_{\widehat{\mathds{M}}}$ and $J^*_{\widehat{\mathds{M}}}$. These act on $h$ and $g$, respectively, as 
\[
J_{\widehat{\mathds{M}}}h = (h(\xi_1), \dots, h(\xi_m))^\top
\]
and
\[
J^*_{\widehat{\mathds{M}}}g = \frac{1}{m} \sum_{i=1}^m g(\xi_i) k(\xi_i,\cdot),
\]
where the kernel function gives rise to the $m\times m$ kernel matrix $K(i,j) =k(\xi_i,\xi_j)$, which is related to the operator $J_{\widehat{\mathds{M}}} J_{\widehat{\mathds{M}}}^{*}g = \frac{1}{m} K g$. The controlled linear difference equations \eqref{system_g} become $m$-dimensional.

\begin{example} The empirical counterpart to the diagonal operator introduced in Example \ref{exb1}, is
\[
\widehat{B}g = b \circ g,
\]
where the $\circ$ denotes the Hadamard product. As for the empirical counterpart to $A$, one has
\[
\widehat{A}h = \frac{1}{m} \sum_{i=1}^m b(\xi_i)g(\xi_i)k_{\xi_i}.
\]
\end{example}

%


\begin{example} The empirical counterpart to the diagonal operator introduced in Example \ref{exb1}, is
\[
\widehat{B}g = \frac{1}{m} \beta \beta^\top g.
\]
As for the empirical counterpart to $A$, one has
\[
\widehat{A}h = \frac{1}{m} \sum_{i=1}^m \left(\frac{1}{m} \sum_{j=1}^m b(\xi_j) g(\xi_j) \right) b(\xi_i)k_{\xi_i}.
\]
\end{example}



Equation \eqref{hsol} shows that the values of the function $g$ at given points $\xi_j$, $j = 1, \dots, m$, allow one to evaluate the function $h$ at $x$ as follows:
\begin{flalign*}
h_{t}(x)& = \phi_0(x) + \sum_{s=0}^{t-1}\langle k(x,\cdot),B[u_s]g_{s}\rangle_{L^2_{\widehat{\mathds{M}}}} = \phi_0(x) + \frac{1}{m}\sum_{s=0}^{t-1} \sum_{j=1}^m k(x,\xi_j) (B[u_s]g_{s})(\xi_j)  ,
\end{flalign*}
which has more compact representation:
\[
h_{t} = \phi_0+ \frac{1}{m} k(\cdot,\xi^T)\sum_{s=0}^{t-1}  (B[u_s]g_{s}).
\]
As a result, notice that our workable specification gives terminal solutions of the form
\begin{equation} \label{base}
h_T = \sum_{j=1}^m c_j  k(\cdot,\xi_j).
\end{equation}

%
%


\section{Optimisation Algorithms} \label{opti}
We propose three different algorithms to solve the minimisation problem \eqref{minprob}, which is in general not convex as shown in Example \ref{exconv}. Algorithm \ref{alg1} is a traditional gradient-descent method and is presented in Section \ref{gradient_descent}. Having derived an expression for the gradient of the error with respect to the controls $u$, and having set an initial guess for the controls, the controls are shifted by a multiple of the (negative) gradient at each iteration.

Algorithm \ref{alg2} is described in Section \ref{least_squares} and involves a set of  first-order approximations of the functions $h_t$, $t = 1, \dots, T$. The controls are updated iteratively so as to minimise the cost function for the linear approximation of $h_t$.

Algorithm \ref{alg3} in Section \ref{ilssec} extends Algorithm \ref{alg2} by introducing a regression step at the end. The output of the algorithm is then the linear approximation of $h(x)$.

We first present cost functions which are often used in applications -- for regression and classification tasks (Section \ref{cosf}), and then describe the algorithms used to minimise said cost functions with respect to the controls.


\subsection{Cost Functions}\label{cosf}
Henceforth we assume that the cost functions are expected functions of the form
\begin{nalign} \label{costexp}
F(h) &= \mathds{E}_{\mathds{P}}[l_T(h(X),Y)],\\
L_t(h,u) &= \mathds{E}_{\mathds{P}}[l_t(h(X),Y,u)],
\end{nalign}
where $l_T$ and $l_t$ are differentiable mappings, $l_T: \mathds{R} \times \mathds{R} \to \mathds{R}$ and $l_t: \mathds{R} \times \mathds{R}  \times \mathds{R}^q \to \mathds{R}$. In addition, in \eqref{costexp},  $X$ and $Y$ denote random variables relative to a measure $\mathds{P}$, $X$ belonging to the state space $\mathcal{X}$ and $Y$ being a target which depends on the application, as clarified by the following examples.
\begin{example}{\textbf{Regression.}} In the case of a regression $Y = f(X)$ for some target function $f \in L^2_{\mathds{P}}$ and $l_T = (h(X)-Y)^2$.
This corresponds to the typical $L^2$ cost which results in the terminal cost function given by
\begin{equation} \label{cost1}
F(h) =  \|h-f\|^2_{L^2_{\mathds{P}}}.
\end{equation}
In the same spirit, one can consider a time-dependent cost function such as
\begin{equation} \label{cost2}
L_t(h,u)= \|h-f_t\|^2_{L^2_{\mathds{P}}} + \lambda \|u\|^2_{\mathds{R}^q}.
\end{equation}
For the cost functions \eqref{cost1} and \eqref{cost2} considered above we obtain the following expressions for the derivatives that appear in \eqref{der1} and \eqref{der2}, 
\begin{align}
DF (h) &= 2 \, J_{\mathds{P}}^*(J_{\mathds{P}}h - f),\label{dfh} \\
D_1L_t(h, u) &= 2\, J_{\mathds{P}}^*(J_{\mathds{P}}h - f_t),\notag \\
D_2L_t(h, u) &= 2 \, \lambda u,\notag
\end{align}
where $J_{\mathds{P}} : \mathcal{H} \mapsto L^2_{\mathds{P}}$ denotes the canonical embedding, which maps functions $h$ into their $\mathds{P}$-a.s.-equivalence class.
\end{example}

\begin{example}\label{logregex} \textbf{Logistic Regression.} In the case of a logistic regression $Y \in \{0,1\}$ and the terminal cost function is the cross entropy
\begin{equation}\label{crosse0}
l_T(h(X),Y) = \log{\left(1+e^{h(X)}\right)} - Y h(X).
\end{equation}
This leads to an expected terminal cost of the form
\begin{equation}\label{crosse}
    F(h) = \mathds{E} \left[\log{\left(1+e^{h(X)}\right)} - Y h(X)\right].
\end{equation}
Having modelled the probability $p$ of an observation having label equal to 1 by means of the sigmoid function 
\[
\sigma(x) = \frac{e^{h(X)}}{1+e^{h(X)}},
\]
equation \eqref{crosse} arises from the negative log likelihood
\begin{flalign*} 
  - \mathds{E} \left[ Y \log p(X) + (1-Y)\log (1-p(X)) \right]  
& = - \mathds{E} \left[ Y \log \sigma(X) + (1-Y)\log (1-\sigma(X)) \right] \\
& = \mathds{E} \left[ \log(1+e^{h(X)}) - Y h(X) \right].
\end{flalign*}
The gradient of $F$ is again denoted as $DF(h) \in \mathcal{H}$ and is given by
\begin{equation}\label{gradlogi}
DF(h) = J^* \left(\frac{e^{h}}{1+e^{h}}- p\right).
\end{equation}
To derive expression \eqref{gradlogi}, we consider how the gradient acts on $v \in \mathcal{H}$.
\begin{nalign}
\notag
    DF(h_T)v &= \frac{d}{d \epsilon} F(h_T + \epsilon v)\rvert_{\epsilon = 0} \\
    & = \mathds{E} \left[\frac{v(X) e^{h(X)}}{1+e^{h(X)}} - Y v(X) \right] \\
\end{nalign}
As far as the term
\[
\mathds{E} \left[\frac{v(X) e^{h(X)}}{1+e^{h(X)}} \right] = \mathds{E} \left[v(X) \sigma(h(X)) \right]
\]
is concerned, we can rewrite it as
\[
\mathds{E} \left[v(X) \sigma(h(X)) \right] = \langle \sigma(h), Jv \rangle_{L^2_{\mathds{P}_X}} = \langle J^* \sigma(h), v, \rangle_{\mathcal{H}}.
\]
We now consider the term
\begin{nalign}\label{trentadue}
    \mathds{E} \left[ Y v(X) \right]& = \int_{\mathcal{X} \times \{0,1 \}} y \, v(x) \, \mathds{P}_{X,Y} (dx,dy)\\
    & = \int_{\mathcal{X}} 1 \,  v(x) \mathds{P}_{X\lvert Y = 1}(dx) \mathds{P}(Y=1).
\end{nalign}
Now, Bayes' rule yields
\[
 \mathds{P}_{X\lvert Y = 1}(dx) =  \frac{\mathds{P}_{Y \lvert X = x}(\{1\})}{\mathds{P}(Y=1)}  \mathds{P}_{X}(dx) = \frac{p(x)}{\mathds{P}(Y=1)} \mathds{P}_{X}(dx),
\]
so that the previous expression \eqref{trentadue} can be rewritten as
\begin{flalign*} 
\mathds{E} \left[ Y v(X) \right]& =\int_{\mathcal{X}} v(x) \, p(x) \, \mathds{P}_{X}(dx) = \langle p, Jv, \rangle_{L^2_{\mathds{P}_X}} \\
 & = \langle J^*p, v, \rangle_{\mathcal{H}}.
\end{flalign*}
Hence, we can conclude that the gradient of the cost function is 
\[
DF(h) = J^* \left(\sigma(h)- p\right).
\]
\end{example}

In practice, we observe training samples $(x_1, \dots , x_n)$ drawn from $\mathds{P}$. This leads to the empirical measure 
$$\widehat{\mathds{P}} = \frac{1}{n} \sum_{i=1}^n \delta_{x_i},$$
 and provides finite-sample estimates of the cost functions $\hat{F}$ and $\hat{L}_t$ which arise by replacing the measure $\mathds{P}$ by its empirical counterpart $\mathds{\hat{P}}$ in \eqref{costexp}. Similarly, we write $\hat{\mathcal{E}}(u)$ and $\widehat{D\mathcal{E}}(u)$ to refer to the empirical versions of the cost function $\mathcal{E}(u)$ and of its gradient $D\mathcal{E}(u)$.

\begin{example} The empirical counterpart to the derivative of the terminal cost function \eqref{dfh} in the case of the regression is
\[
\widehat{DF}(h)  = \frac{2}{n} \sum_{i=1}^n k(x_i, \cdot)(h(x_i)-f(x_i)).
\]
\end{example}
 
\begin{example} Having approximated $p(x)$ by its empirical counterpart $\hat{p}(x)$, the empirical counterpart to the derivative of the terminal cost function \eqref{gradlogi} in the case of the logistic regression is
\begin{flalign*} 
\widehat{DF}(h) & = \frac{1}{n} \sum_{i=1}^n k(x_i, \cdot) \left(\frac{e^{h(x_i)}}{1+e^{h(x_i)}} - \hat{p}(x_i) \right)  \\
& = \frac{1}{n} \sum_{i=1}^n k(x_i, \cdot) \left(\frac{e^{h(x_i)}}{1+e^{h(x_i)}} - \frac{1}{\#\{j: x_j = x_i\}} \sum_{j: x_j = x_i} y_j \right) \\
& = \frac{1}{n} \sum_{i=1}^n k(x_i, \cdot) \left(\frac{e^{h(x_i)}}{1+e^{h(x_i)}} - y_i \right).
\end{flalign*}
 \end{example}

The following subsections are devoted to the detailed description of the optimisation strategies used in the search for the optimal controls $(u_t)$.

\subsection{Gradient Descent} \label{gradient_descent}

Algorithm \ref{alg1} reports the steps implemented in a gradient descent method. After an initial guess for the controls, these are shifted by a multiple $\alpha \in \mathds{R}$,  $\alpha > 0$, of the negative gradient at each iteration. 
\IncMargin{0mm}
\begin{algorithm}
\SetAlgoLined
\vspace{2mm}
\hspace{-5mm} \textbf{Output:} approximating functions $\left(h_1\left(u^{(i)}\right), \dots, h_T\left(u^{(i)}\right)\right)$.\\

 \SetKwBlock{Begin}{Initialise}{end}
 \Begin{
$u^{(0)} \leftarrow$ initial guess for the controls\\
$\alpha \leftarrow $ learning rate\\
$n \leftarrow$ size of the mini batch\\
$T \leftarrow$ number of time steps\\
$i \leftarrow 0$\\
}
\SetKwBlock{Begin}{While stopping criterion not satisfied}{end}
 \Begin{
Sample points of the mini batch $x_1^{(i)}, \dots,  x_n^{(i)}$\\ 
$\widehat{D\mathcal{E}}(u^{(i)}) \leftarrow $ gradient of the error with respect to the controls computed over the mini batch\\
$u^{(i+1)} \leftarrow u^{(i)} -  \alpha \widehat{D\mathcal{E}}(u^{(i)})$\\
$i \leftarrow i + 1$\\
}
\textbf{Return} $\left(h_1\left(u^{(i)}\right), \dots, h_T\left(u^{(i)}\right)\right)$
\vspace{2mm}
 \caption{Stochastic Gradient Descent}
 \label{alg1}
\end{algorithm}

\subsection{Iterative Regression} \label{least_squares}

\IncMargin{0mm}
\begin{algorithm}
\SetAlgoLined
\vspace{2mm}
\hspace{-5mm} \textbf{Output:} approximating functions $\left(h_1\left(u^{(i)}\right), \dots, h_T\left(u^{(i)}\right)\right)$.\\

 \SetKwBlock{Begin}{Initialise}{end}
 \Begin{
$u^{(0)} \leftarrow$ initial guess for the controls\\
$F \leftarrow $ terminal cost function\\
$L \leftarrow $ time-dependent cost function\\
$T \leftarrow$ number of time steps\\
$n \leftarrow$ size of the mini batch\\
$\lambda \geq 0 \leftarrow$ regularisation coefficient\\
$i = 0$\\
}
\SetKwBlock{Begin}{While stopping criterion is not satisfied}{end}
 \Begin{
Sample points of the mini batch $x_1^{(i)}, \dots,  x_n^{(i)}$\\
Solve the following problem on the mini batch:
\begin{flalign*}
\beta^{(i)} \leftarrow & \mbox{argmin}_{\beta \in \mathds{R}^{q \times T}} \Biggl\{\hat{F}\left(h_T(u^{(i)}) + D_uh_T(u^{(i)})\beta  \right)&&\\
& \quad \quad + \sum_{t=0}^{T-1} \hat{L}_t\left(h_t(u^{(i)}) + D_uh_t(u^{(i)})\beta, u_t^{(i)}+\beta_t\right) + \lambda \|\beta\|^2_F \Biggr\}&&
\end{flalign*}\\
$u^{(i+1)} \leftarrow u^{(i)} +  \beta^{(i)}$\\
$i \leftarrow i+1$
}
\textbf{Return} $\left(h_1\left(u^{(i)}\right), \dots, h_T\left(u^{(i)}\right)\right)$
\vspace{2mm}
 \caption{Iterative Regression}
 \label{alg2}
\end{algorithm}

Algorithm \ref{alg2} consists in learning the optimal controls by iteratively finding the optimal controls for linear approximations of the functions $(h_t)_t$, $t = 1, \dots, T$.

Having chosen an initial set of controls $u^{(0)}$, the first-order Taylor approximation of $h_t$ around $u^{(0)}$ is
\begin{equation} \label{taylor}
h_t\left(u^{(0)}+\beta\right) \approx h_t\left(u^{(0)}\right) +  D_uh_t\left(u^{(0)}\right)\beta,
\end{equation}
having denoted as $D_uh_t\left(u^{(0)}\right)$ the gradient of $h_t$ with respect to controls $u$, the linear operator mapping $\mathds{R}^{q \times T}$ to $\mathcal{H}$.  

It is possible to learn the set of $\beta$ by solving the regularised minimisation problem
\[
\min_{\beta \in \mathds{R}^{q \times T}} \left\{F\left(h_T\left(u^{(0)}\right) +  D_uh_T\left(u^{(0)}\right)\beta  \right)  +\sum_{t=0}^{T-1} L_t\left (h_t\left(u^{(0)}\right) + D_uh_t\left(u^{(0)}\right)\beta, u_t^{(0)}+\beta_t \right) + \lambda \|\beta\|^2_F \right\},
\]
for some terminal cost function $F$ and time-dependent cost function $L$ which depend on the problem. 

The goal is to iterate this procedure in order to look for a $\beta$ which makes the linear approximations closer and closer to minimising the error functional. More precisely, we build a sequence of controls $u^{(i)}$ by minimising a cost function, as follows. At iteration $i$, for a given $u^{(i-1)}$, we look for the optimal $\beta$ providing the linear approximation \eqref{taylor} of $h_t$ around $u^{(i-1)}$ which minimises the cost functional. We then obtain $u^{(i)} = u^{(i-1)} + \beta$ and iterate with a new approximation of $h_t$ around $u^{(i)}$.

Regularisation renders itself necessary in order to prevent the controls from becoming too large, so that, when the functions $h$ are evaluated with the updated control $u^{(i-1)} + \beta$ at iteration $i$, the function does not explode. 

In order to learn the optimal controls by means of the least-square approach, we need to derive an expression  for the gradient of the function $h$ with respect to the controls $u$, which enters equation \eqref{taylor}. To this end, we exploit our knowledge of the expression of the gradient of a cost function, which was derived in Theorem \ref{lemmagrad}. Let $x$ be a query point in the state space $\mathcal{X}$. We write $h_T(x)$ as a cost function:
\[
\mathcal{E}(u):= \langle k_x,h_T(u) \rangle_{\mathcal{H}} = F(h_T(u))
\]
for $F(h) = \langle k_x,h \rangle_{\mathcal{H}}$, and proceed similarly for $h_t(x)$:
\[
\mathcal{E}(u):= \langle k_x,h_t(u) \rangle_{\mathcal{H}} = L(h_t(u),u_t),
\]
for $L(h,u)=  \langle k_x,h \rangle_{\mathcal{H}}$.

As shown in Theorem \ref{lemmagrad}, in order to find the gradient of $\mathcal{E}$ we need to solve the adjoint equation with terminal condition $h_T^*  = DF(h_T)$. When the expression of the cost function is $F(h) = \langle k_x,h \rangle_{\mathcal{H}}$, namely when we are computing the derivative of $h_T$, the derivative of $F$ is
\[
DF(h) = k_x,
\]
so that one has to solve \eqref{der1} with terminal condition $ h_T^* = k_x$. Instead, when we consider the derivative of $h_t$ for $t<T$, the terminal cost function is equal to zero, so that one has to solve Equation \eqref{der1} with terminal condition $ h_T^* = 0$. Once the solutions to both the forward and the backward equations have been computed, the desired expression for the derivative follows from Equation \eqref{der2}.

In applications, we make use of the workable specification on $L^2_{\mathds{M}}$ and we refer to Section \ref{ws} and in particular to Equation \eqref{system_bigPsi} for the discussion concerning the solution of the backward system for multiple terminal conditions. Again, the forward and backward equations for $g$ and $g^*$ have to be solved in order to evaluate the expression for the gradient. As far as $g^*$ is concerned, Lemma \ref{lemmagstar} provides its expression in terms of basis functions $\Psi$, which reduces the computational overhead of having to solve the system for several terminal conditions.


Considering for example the derivative of $h_T$, and the operator $\Psi$ solving the system of difference equations \eqref{system_bigPsi}, the terminal condition for $g^*$ can be written as
\[
g_T^* = J_{\mathds{M}}k_x,
\]
so that Equation \eqref{gfrompsi} implies that
\[
g_t^* = \Psi_t g_T^*. 
\]
Once the solutions to the forward and backward system have been computed, the expression of the gradient of $h_T$ with respect to $u$ becomes available, using Formula \eqref{grg}. Having picked an initial control $u_0$ and evaluated the gradient of $h_T$ at $u_0$, it is as a result possible to linearly approximate $h_T$ using equation \eqref{taylor}.
Similar steps lead to the linear approximation of $h_t$, $t = 1, \dots, T-1$. Once the linear approximations of the functions $h$ are available, one proceeds by iterating them in order for the linear approximation to be closer and closer to the minimisation of the error functional. The detailed steps of the method are presented in Algorithm \ref{alg2}.

Lastly, we notice that the expression \eqref{taylor} is reminiscent of the notion of tangent kernel, developed by \cite{tangkernel}. Indeed, the authors show that the neural network mapping follows the kernel gradient of the functional cost with respect to the neural tangent kernel. This establishes another link between this work, kernel methods and artificial neural networks.


\subsection{Enhanced Iterative Regression} \label{ilssec}
The third optimisation algorithm which we propose is a slight modification of Algorithm \ref{alg2}. Like Algorithm \ref{alg2}, Algorithm \ref{alg3} features an iterative procedure which aims at bringing the controls close to the optimal ones by solving an optimisation problem which involves the linearised version of $h(x)$. In addition to that, however, Algorithm \ref{alg3} includes a non-regularised regression step at the end. The output of the algorithm is the linear approximation of $h_t(x)$, denoted by $\widetilde{h_t}(x)$ and given by
\begin{equation} \label{htil}
\widetilde{h_t}=  h_t\left(u\right)+  D_uh_t\left(u^{}\right)\beta, \quad t = 1, \dots, T.
\end{equation}

\IncMargin{0mm}
\begin{algorithm}
\SetAlgoLined
\vspace{2mm}
\hspace{-5mm} \textbf{Output:} approximating functions $\left(\widetilde{h_1}\left(u^{(i)}\right), \dots, \widetilde{h_T}\left(u^{(i)}\right)\right)$.\\
 \SetKwBlock{Begin}{Iterative Regression}{end}
 \Begin{
 Apply Algorithm \ref{alg2}
 }
\SetKwBlock{Begin}{Final Step}{end}
 \Begin{
 Sample points of the mini batch $x_1^{(i)}, \dots,  x_n^{(i)}$\\ 
 Solve the following problem on the mini batch:
\begin{flalign*}
\beta^{(i)} & \leftarrow \mbox{argmin}_{\beta \in \mathds{R}^{q \times T}}  \Biggl\{\hat{F}\left(h_T(u^{(i)}) + D_uh_T(u^{(i)})\beta  \right)&& \\
& \quad \quad + \sum_{t=0}^{T-1} \hat{L}_t\left(h_t(u^{(i)}) + D_uh_t(u^{(i)})\beta, u_t^{(i)}+\beta_t\right) \Biggr\}&&
\end{flalign*}
}
\textbf{Return} $\left(\widetilde{h_1}\left(u^{(i)}\right), \dots, \widetilde{h_T}\left(u^{(i)}\right)\right)$
\vspace{2mm}
 \caption{Enhanced Iterative Regression}
 \label{alg3}
\end{algorithm}


Algorithm \ref{alg3} describes the steps that are performed, which are the same as those in Algorithm \ref{alg2} with the addition of a regression step after the iterative procedure.



\section{Examples}
This section is devoted to the description of the experimental setup and the experiments themselves. The method hitherto presented is flexible enough to lend itself to a wide set of applications. At the same time, such a flexibility comes with a set of parameters which need to be tuned, as well as precise choices for the control system.
Here, also inspired by the results of \cite{Cucchiero20}, we take $q = 2$.

Although it is in practice true that certain starting points make the optimal control easier to be found, in order to preserve the generality of the method we set the initial function $h_0$ as a constant function when performing our experiments. 

In addition, all the optimisation methods proposed require to choose an initial control $u_0$, which is then modified by a sequence of gradient steps (according to the first method -- Section \ref{gradient_descent}) or around which a linear expansion of the approximating function is built (as needed for Algorithms \ref{alg2} and \ref{alg3} - Sections \ref{least_squares} and \ref{ilssec}, respectively). Again, with a view for our set-up to be as generic as possible, we take, unless otherwise stated, our initial control to be a random draw from mean-zero gaussian random variables.
The control system also relies on the definition of the operators $B_1$ and $B_2$. We take the first one to be a diagonal operator, and the second one to be rank-one. In both cases, we construct them so that they have unitary norm, and Examples \ref{exb1} and \ref{exb2} have been devoted to the study of the properties of such operators.

A kernel is needed in order to define the reproducing-kernel Hilbert space. In our experiments, results are provided under the assumption of a Gaussian kernel
\[
K(x,x') = \exp{-\frac{\| x-x'\|^2}{2s^2}}.
\]
The scale parameter $s$ is reported for each experiment.

Further elements to be determined include the choice of a learning rate (when needed) and of the batch size $n$, the size $m$ of the inner sample $(\xi_i)_{i=1}^m$, and the time grid, namely a set of $T$ time points. 
The following subsections report the performances of the Algorithms for four different case studies and compared to several benchmark methods. Performance metrics are computed out of sample, on a test set whose size is reported for each example. The cost function in the cases considered is a terminal cost function, which changes according to the applications. This means that in the following $L(h,u) = 0$.

\subsection{Toy Example} \label{sine_text}
We first test our method on a one-dimensional example. The target function is chosen as the sine function on the interval $[-\pi,\pi]$ and the three algorithms considered are used to learn the function from a training set.

Table \ref{Tab:val_sine} reports the hyper-parameters selected for the sine problem, while Table \ref{Tab:sin} lists the values of the error metrics on a test set for each of the Algorithms considered. In addition, the table reports the same metrics for two benchmark methods. The first, a naive one, evaluates the error metric for the approximating function resulting from the initial control $u^{(0)}$, namely from the random draw used at the beginning of the optimisation algorithms. After drawing the initial control $u^{(0)}$, the naive benchmark leads to the cost $F(h(u^{(0)}))$. The second benchmark method is a standard kernel regression, namely a linear regression of the target values on the basis functions $K(\xi_i,\cdot)$, $i = 1, \dots, m$. This is a natural consequence of the fact that, as shown by \eqref{base}, the approximating function is a linear combination of the kernel functions. As a result, the accuracy of the linear regression on said kernel functions represents the natural benchmark.

\begin{table}[!h]
\begin{center}
\begin{tabular}{c c c c c c c}
  & $T$ & $m$ & $s$ & $\mu$ & $\sigma$ & $n$ \\ 
 \hline
Values & 20& 10& $10^{0.7}$ & 0& 1& 300\\  
\end{tabular}
\caption{\label{Tab:val_sine}Hyper-parameters used in the sine problem. The parameter $T$ refers to the number of time steps in the control system, $m$ to the number of points drawn from $\mathds{M}$ -- namely the size of the inner sample $(\xi_i)_{i=1}^m$, $s$ is the Gaussian kernel scale hyper-parameter, $\mu$ and $\sigma$ are the mean and standard deviation of the normal distributions from which the initial controls are sampled, $n$ is the size of the mini batch.}
\end{center}
\end{table}

\begin{table}[!h]
\begin{center}
\begin{tabular}{c c c}
  & RMSE & Iterations \\ 
 \hline
Algorithm 1 & 5.69*1e-2 & 1e6\\  
Algorithm 2 & 4.04*1e-3 & 100\\
Algorithm 3 & 2.18*1e-4 & 101\\
Naive & 0.86 & - \\ 
Kernel Regression & 8.76*1e-5 & -
\end{tabular}
\caption{\label{Tab:sin}Root-mean-square error and number of iterations for the sine example and the three algorithms considered, as well as the error for two proposed benchmarks computed on a test set of 1000 points. The training data for the Kernel Regression (KR) was taken of size equal to 10000.}
\end{center}
\end{table}
The one-dimensional example allows to visually assess the performance of the algorithms.
Figures \ref{fig:sine1} and \ref{fig:sine2} show the performance of the method when the controls are learnt by Algorithms \ref{alg2} and \ref{alg3} and the target function is the sine function over the interval $[-\pi,\pi]$. Two cases are illustrated in Figure \ref{fig:sine1}. The label \textit{Linear Approximation} refers to the approach which yields the linear approximation \eqref{htil} after a number $\tau$ of iterations. In the second case, referred to as \textit{Control System}, the approximation function is the solution to the control system with optimal controls $u_{\tau} + \beta$, namely $h_T(u_{\tau} + \beta)$. This corresponds to the controls obtained at iteration $\tau$ with the addition of a step equal to $\beta$.

\begin{figure}
        \centering
            \includegraphics[width=0.475\textwidth]{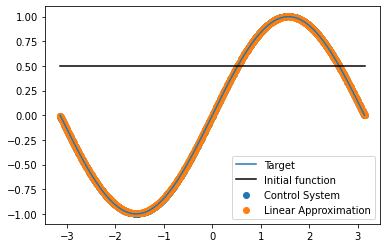}
            \caption{Sine function learnt via Algorithm 2.}    
            \label{fig:sine1}
\end{figure}
\begin{figure}
        \centering
            \includegraphics[width=0.475\textwidth]{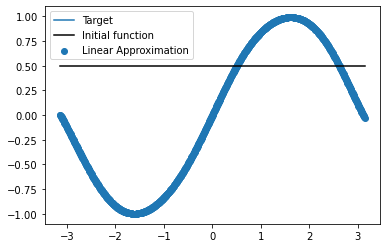}
            \caption{Sine function learnt via Algorithm 3.}    
            \label{fig:sine2}
\end{figure}

\subsection{Multi-dimensional Toy Example}
The algorithms are further tested on a three-dimensional example. Specifically, the target function is chosen as
\[
f(x_1,x_2,x_3) = 0.5,x_1 -0.2 x_2 +0.1x_3,
\]
on the hyper cube $[-3,3]^{3}$.

We use the three algorithms considered to learn a set of optimal controls and test them on a test set of 10000 points. In addition, we compare our method to a kernel regression and to the naive approach, as described in Section \ref{sine_text}. Table \ref{Tab:val_toy3} reports the hyper-parameters selected for the multi-dimensional linear problem. Table \ref{Tab:3d} reports the root mean-square error for the three approaches considered and for the benchmark methods. 

\begin{table}[!h]
\begin{center}
\begin{tabular}{c c c c c c c}
  & $T$ & $m$ & $s$ & $\mu$ & $\sigma$ & $n$ \\ 
 \hline
Values & 50& 10& 1& 0& 0.1& 1000\\  
\end{tabular}
\caption{\label{Tab:val_toy3}Hyper-parameters used in the three-dimensional problem. The parameter $T$ refers to the number of time steps in the control system, $m$ to the number of points drawn from $\mathds{M}$, $s$ is the Gaussian kernel hyper-parameter, $\mu$ and $\sigma$ are the mean and standard deviation of the normal distributions from which the initial controls are sampled, $n$ is the size of the mini batch.}
\end{center}
\end{table}

\begin{table}[!h]
\begin{center}
\begin{tabular}{c c c}
  & RMSE & Iterations\\ 
 \hline
Algorithm 1 &1.43*1e-1 & 1e6\\  
Algorithm 2 & 3.43*1e-4 & 5*1e5\\
Algorithm 3 & 3.56*1e-4 & 5*1e5+1\\
Naive & 1.5\\
Kernel Regression & 1.11*1e-4
\end{tabular}
\caption{\label{Tab:3d}Root-mean-square error and number of iterations for the three-dimensional toy example and the three algorithms considered, as well as the error for two proposed benchmarks. The training data for the Kernel Regression (KR) was taken of size equal to 10000.}
\end{center}
\end{table}
Although the kernel regression performs slightly better, Algorithms \ref{alg2} and \ref{alg3} give results which are in the same order of magnitude, namely $10^{-4}$. All these methods significantly outperform a gradient-descent approach. Figure \ref{fig:3d_a} shows the distribution of the error of the approximation resulting from the solution of the control system (Algorithm \ref{alg2}). The plot in the case of the linear approximation (Algorithm \ref{alg3}) is similar.
\begin{figure}
        \centering
            \includegraphics[width=0.475\textwidth]{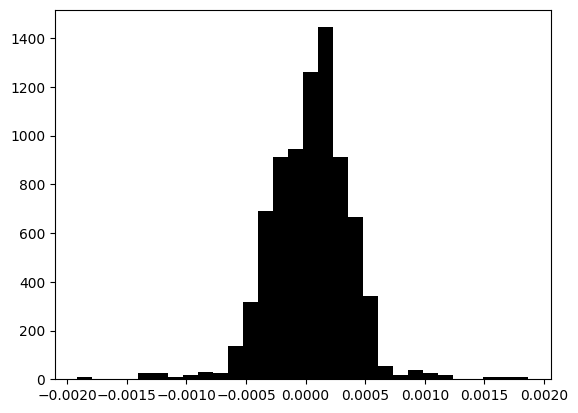}
            \caption{Distribution of the prediction error on a test set of 10000 points for iterated regression on the three-dimensional toy example.}    
            \label{fig:3d_a}
\end{figure}

\subsection{Regression}
We consider the task of learning a pricing function. The goal is to be able to price European call options under the Heston model for a set of given parameters. In order to do this, one first exploits a stochastic model to compute a set of prices for several choices of the parameters of interest. This constitutes the training set, to which the method proposed in the previous sections is applied with the aim of learning the pricing function. The underlying principle is that a function provides prices faster than what would be the case should one solve the stochastic model each time for a new choice of the parameters. 

We consider a European call option, namely a contract with payoff 
\[
f(S_T) = (S_T-K)_{+},
\]
where $S_t$ denotes the price of the underlying stock at time $t$ and $K$ and $T$ are respectively the strike price and the maturity, characteristics of the option. The underlying stock price is described by the Heston model, whose dynamics for $S$ is given by
\[
dS_t = S_t \left(r \, dt + \sqrt{V_t} \, dW^{(1)}_t\right),
\]
for an interest rate $r$ and a Brownian motion $W^{(1)}_t$. The variance process $V_t$ is also stochastic and follows the dynamics
\[
dV_t = \kappa(\theta-V_t) \, dt + \sigma \sqrt{V_t} dW^{(2)}_t.
\]
The two Brownian motions $W^{(1)}$ and $W^{(2)}$ have correlation $\rho$. In addition, $\kappa$ is the mean-reversion speed, $\theta$ is the mean-reversion level of the variance process.

We use the Fast Fourier Transform (FFT) approach based on the characteristic function of the Heston model to compute prices for a set of parameters. Specifically, for several combinations of parameter values in a given range we compute prices and form a training set. We choose parameters to take values as reported in the following Table \ref{Tab:Tcr}.
\begin{table}[!h]
\begin{center}
\begin{tabular}{c c}
 Parameter & Values \\ 
 \hline
$K$ & [50,150] \\  
$T$ & [11/12,1]\\
$r$ & [0.015,0.025] \\
$\kappa$ & [1.5,2.5]\\
$\theta$ & [0.5,0.7]\\
$\rho$ & [-0.7,-0.5]\\
$\sigma$ & [0.02,0.1]\\
$V_0$ & [0.02,0.1]
\end{tabular}
\caption{\label{Tab:Tcr}Ranges of values for the selected model and market parameters.}
\end{center}
\end{table}

\begin{table}[!h]
\begin{center}
\begin{tabular}{c c c c c c c}
  & $T$ & $m$ & $s$ & $\mu$ & $\sigma$ & $n$ \\ 
 \hline
Values & 20& 500& $10^{0.1}$ &0& 20& 1000 \\  
\end{tabular}
\caption{\label{Tab:val_eu} Hyper-parameters used in the regression problem. The parameter $T$ refers to the number of time steps in the control system, $m$ to the number of points drawn from $\mathds{M}$, $s$ is the Gaussian kernel hyper-parameter, $\mu$ and $\sigma$ are the mean and standard deviation of the normal distributions from which the initial controls are sampled, $n$ is the size of the mini batch.}
\end{center}
\end{table}

Table \ref{Tab:val_eu} reports the parameters used in the optimisation process. The accuracy of the methods proposed in the case of the pricing problem is shown in Table \ref{Tab:eu}. As done previously, we consider all the optimisation methods described in Section \ref{opti}. The errors are computed relative to a benchmark, the Fast Fourier Transform.

\begin{table}[!h]
\begin{center}
\begin{tabular}{c c c c}
  & RMSE & MAPE & Iterations\\ 
 \hline
Algorithm 1 & 1.12*1e-1&5.2*1e-1 & 3*1e4\\  
Algorithm 2 & 1.89*1e-2& 8.61*1e-2 &1000\\
Algorithm 3 & 1.12*1e-2& 4.9*1e-2 & 1001\\
Naive & 7.50*1e-1& 3.95 & -\\
GPR & 3.02*1e-3& 1.50*1e-3 & -\\
Kernel Regression & 1.01*1e-3 & 2.20*1e-3
\end{tabular}
\caption{\label{Tab:eu}Root-mean-square error and mean-absolute-percentage error for the pricing problem and the algorithms considered, as well as for two benchmark methods, a gaussian-process regression and a naive approach. The table also shows the number of iterations used for the algorithms considered. The training data for the Gaussian-Process Regression (GPR) and for the Kernel Regression was taken of size equal to 10000.}
\end{center}
\end{table}
Figure \ref{fig:eu_a} shows the distribution of the error of the approximation resulting from the linear approximation (Algorithm \ref{alg3}). The plot in the case of Algorithm \ref{alg2} is analogous, unlike that for Algorithm \ref{alg1}, which is shown to underperform relative to the other approaches.

\begin{figure}
        \centering
            \includegraphics[width=0.475\textwidth]{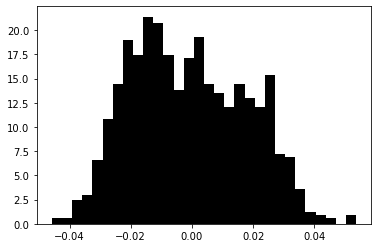}
            \caption{Distribution of the prediction error on a test set of 1000 points for iterated regression.}    
            \label{fig:eu_a}
\end{figure}

\subsection{Classification}
We consider the Higgs boson detection problem from Kaggle. This competition (\url{https://www.kaggle.com/c/higgs-boson/overview}) aims at developing a model which can identify observations of the Higgs boson based on observed physical quantities. The task involves the classification of events into tau tau decay of a Higgs boson versus background. The problem of the Higgs boson identification is currently one of the most discussed in the field of particle physics, and machine learning offers promising approaches which can be deployed to come up with novel solutions. In this dataset, observations are provided by CERN from the ATLAS experiment. The training data (\texttt{training.csv}) consists of 250000 events, with an ID column, 30 feature columns, a weight column and a label column. The classification problem is that of assigning events to a label ($\{0,1\}$) based on the values of the features. 65\% of the data take label 0.

We consider two alternative approaches to the learning problem in a classification setting, with two different error functionals: the cross entropy function and a quadratic cost.

In the first case, the error functional is the expectation \eqref{crosse} of the cross entropy \eqref{crosse0}. An explicit expression for the derivative of the terminal cost function was derived in Example \ref{logregex} and allows to learn the set of optimal controls by gradient descent, as described in Section \ref{gradient_descent}. The other optimisation methods proposed are also tested, using the suitable cost function in Algorithms \ref{alg2} and \ref{alg3}.

The alternative approach, following \cite{James2013}, consists of working with the quadratic terminal costs \eqref{cost1}, thus leading to a least-square problem.
Given the binary nature of the response in the problem that we consider, it can be shown that least squares are equivalent to linear discriminant analysis (LDA). In turn, there exist cases where using a cross-entropy error functional leads to unstable estimates, and other approaches, such as LDA, are to be preferred (see again \cite{James2013}). The quadratic terminal cost \eqref{cost1} lends itself to the optimisation methods described in Sections \ref{gradient_descent}, \ref{least_squares} and \ref{ilssec}.

We implement the approaches hitherto described and find the one based on regression with a quadratic terminal cost (Algorithms \ref{alg2} and \ref{alg3}) to perform best, and only slightly worse compared to traditional methods, such as ridge regression with feature expansion. Tables \ref{Tab:hig21} and \ref{Tab:hig22} show two relevant metrics in a classification context, the accuracy and the F1 Score, for the two terminal costs, the cross entropy and the quadratic cost. Accuracy values for other standard methods are reported in Table \ref{Tab:hig3}, showing an adequate performance of our proposed method.

\begin{table}[!h]
\begin{center}
\begin{tabular}{c c c c c c c}
  & $T$ & $m$ & $s$ & $\mu$ & $\sigma$ & $n$ \\ 
 \hline
Values & 12& 100& $10^{1.5}$& 0& 10& 1000\\  
\end{tabular}
\caption{\label{Tab:val_class}Hyper-parameters used in the classification problem. The parameter $T$ refers to the number of time steps in the control system, $m$ to the number of points drawn from $\mathds{M}$, $s$ is the Gaussian kernel hyper-parameter, $\mu$ and $\sigma$ are the mean and standard deviation of the normal distributions from which the initial controls are sampled, $n$ is the size of the mini batch.}
\end{center}
\end{table}

\begin{table}[!h]
\begin{center}
\begin{tabular}{c c c}
  & Accuracy & F1 Score\\
\hline
Algorithm 1 & 0.68 & 0.26 \\
Algorithm 2 & 0.77 & 0.61\\
Algorithm 3 &  0.81& 0.71\\
\end{tabular}
\caption{\label{Tab:hig21} Metrics on a test set of 1000 observations for the proposed algorithms and the $L^2$ cost.}
\end{center}
\end{table}

\begin{table}[!h]
\begin{center}
\begin{tabular}{c c c}
  & Accuracy & F1 Score\\
\hline
Algorithm 1 &  0.69 & 0.20\\
Algorithm 2 & 0.68 & 0.61\\
Algorithm 3 &  0.70& 0.38\\
\end{tabular}
\caption{\label{Tab:hig22} Metrics on a test set of 1000 observations for the proposed algorithms and the cross-entropy cost.}
\end{center}
\end{table}

\begin{table}[!h]
\begin{center}
\begin{tabular}{c c c}
Method & Accuracy & F1 Score\\
\hline
Ridge & 0.83 & 0.87\\
Lasso & 0.72 & 0.86\\
Logit & 0.69 & 0.65\\
Kernel Regression & 0.89 & 0.81
\end{tabular}
\caption{\label{Tab:hig3} Metrics on a test set of 1000 observations for alternative methods. The training data was taken of size equal to 10000 for the last approach, while the full dataset was used for the other ones.}
\end{center}
\end{table}

Figure \ref{fig:higfig} shows the histogram of the values obtained by the approximating function in the $L^2$ case and for Algorithm \ref{alg2}, which are for the most part in the range $[0,1]$, as desired.
\begin{figure}
        \centering
            \includegraphics[width=0.475\textwidth]{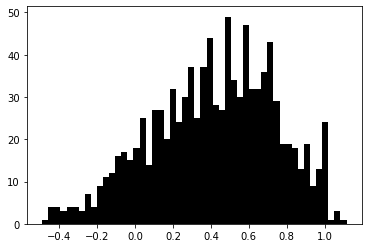}
            \caption{Distribution of the values obtained by the approximating function.}    
            \label{fig:higfig}
\end{figure}

\section{Conclusion} Our approach shows how to learn from data by following a control-theoretical perspective. Having parametrised the approximating function as the solution to a control system on a Hilbert space, we propose several algorithms to find the set of controls which bring the initial function as close as possible to the target function. We derive the expression for the gradient of the cost function with respect to the controls that parametrise the difference equations and propose a gradient-descent based method to find the optimal controls. In addition, we show how to compute derivatives of the approximating functions with respect to said controls and propose two alternative optimisation methods relying on linear approximations of the approximating functions around a fixed control. 

Interestingly, we show how the assumptions we make lead to results which are coherent with Pontryagin's maximum principle.  We test the proposed optimisation algorithms on two toy examples and on two higher-dimensional real world problems, showing that our method succeeds in learning from real data and is versatile enough to tackle learning tasks of different nature.

As well as introducing a novel learning approach, our paper fully links control theory and learning theory. Control theory is explicitly being used within the learning task and control-theoretical results are naturally recovered as part of the learning process. Our paper also extends the previous literature, which is typically based on a state space in $\mathds{R}^n$, to Hilbert spaces, and to bilinear systems, rather than linear ones. In addition, our approach allows to view a control problem in the light of a learning task, which is especially relevant in the current research environment.

Further research shall focus on alternative parametrisations of the approximating functions and of different specifications of the control system in order to enhance the accuracy of the approximating function. 


\bibliographystyle{chicago}
  \bibliography{references2_.bib}

\end{document}